\newcommand{\defi}[1]{\textsf{#1}} 
\def\act#1#2%
\newcommand{\Z}{{\mathbb Z}}
\newcommand{\N}{{\mathbb N}}
\newcommand{\Q}{{\mathbb Q}}
\newcommand{\F}{{\mathbb F}}
\newcommand{\A}{{\mathbb A}}
\newcommand{\Kbar}{{\overline{K}}}
\DeclareMathOperator{\Sel}{Sel}
\DeclareMathOperator{\End}{End}
\DeclareMathOperator{\Gal}{Gal}
\DeclareMathOperator{\Res}{Res}
\DeclareMathOperator{\Br}{Br}
\DeclareMathOperator{\Jac}{Jac}
\DeclareMathOperator{\HH}{H}
\DeclareMathOperator{\Spec}{Spec}
\DeclareMathOperator{\red}{red}
\def\sep{{\rm sep}}
\newtheorem{Theorem}{Theorem}[section]
\newtheorem{Lemma}[Theorem]{Lemma}
\newtheorem{Proposition}[Theorem]{Proposition}
\newtheorem{Definition}[Theorem]{Definition}
\newtheorem{Remark}[Theorem]{Remark}
\theoremstyle{definition}
\numberwithin{equation}{section}
\begin{document}
\title{The Brauer-Manin obstruction for nonisotrivial curves over global function fields}
\author{Brendan Creutz}
\address{School of Mathematics and Statistics, University of Canterbury, Private
 Bag 4800, Christchurch 8140, New Zealand}
\email{brendan.creutz@canterbury.ac.nz}
\urladdr{http://www.math.canterbury.ac.nz/\~{}b.creutz}

\author{Jos\'e Felipe Voloch}
\address{School of Mathematics and Statistics, University of Canterbury, Private
 Bag 4800, Christchurch 8140, New Zealand}
\email{felipe.voloch@canterbury.ac.nz}
\urladdr{http://www.math.canterbury.ac.nz/\~{}f.voloch}

\author{an appendix by Damian R\"ossler}
\address{University of Oxford,
Andrew Wiles Building,
Radcliffe Observatory Quarter,
Woodstock Road,
Oxford OX2 6GG,
United Kingdom}
\email{damian.rossler@maths.ox.ac.uk}
\urladdr{http://people.maths.ox.ac.uk/rossler/}


\begin{abstract}
We prove that the set of rational points on a nonisotrivial curves of genus at least $2$ over a global function field is equal to the set of adelic points cut out by the Brauer-Manin obstruction.
\end{abstract}

\maketitle

\section{Introduction}

Let $X/K$ be a smooth projective and geometrically irreducible curve of genus at least $2$ over a global field $K$ of characteristic $p > 0$. 
We prove that if $X$ is not isotrivial, then the Brauer-Manin obstruction cuts out exactly the set of rational points on $X$.

\begin{Theorem}\label{thm:BM}
	Let $X/K$ be a smooth projective curve of genus at least $2$ over a global function field $K$. If $X$ is not isotrivial, then $X(\A_K)^{\Br} = X(K)\,.$
\end{Theorem}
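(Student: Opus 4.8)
The plan is to transfer the question from $X$ to a torsor under its Jacobian $J=\Jac(X)$, reduce the Brauer--Manin condition there to membership in the closure of the Mordell--Weil group inside the adelic points, and then show that this closure meets $X$ only along $X(K)$; the decisive use of nonisotriviality will come in the $p$-primary part of the last step, which is the role of the appendix. Concretely, after disposing of the trivial case $X(\A_K)=\emptyset$, I would use the Abel--Jacobi morphism to get a closed immersion $\iota\colon X\hookrightarrow T$ into a torsor $T$ under $J$ (the Albanese torsor of $X$), with $T=J$ when $X(K)\neq\emptyset$. Since $\Xbar$ is a smooth projective curve over an algebraically closed field, Tsen's theorem gives $\Br(\Xbar)=0$, so every Brauer class on $X$ is algebraic, $\Br(X)/\Br(K)\isom\HH^{1}(K,\Pic\Xbar)$, and the exact sequence $0\to J(\Kbar)\to\Pic\Xbar\to\Z\to0$ shows each such class is pulled back from a torsor over $T$ under an abelian $K$-group scheme. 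Functoriality of the Brauer--Manin pairing then gives
\[
 \iota\bigl(X(\A_K)^{\Br}\bigr)\ \subseteq\ \iota\bigl(X(\A_K)\bigr)\,\cap\, T(\A_K)^{\fab},
\]
where $T(\A_K)^{\fab}$ is the set of adelic points of $T$ surviving descent along all torsors under abelian $K$-group schemes, so it suffices to prove $\iota(X(\A_K))\cap T(\A_K)^{\fab}=\iota(X(K))$.

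Next I would apply the now-standard identification (as in the work of Stoll and of Poonen--Voloch), via Poitou--Tate duality for the finite group schemes $J[n]$ together with the Kummer sequences $0\to J(K)/nJ(K)\to\HH^{1}(K,J[n])\to\HH^{1}(K,J)[n]\to0$ and their local analogues, of $T(\A_K)^{\fab}$ with (a translate of) the closure $\overline{J(K)}$ of the Mordell--Weil group inside $\varprojlim_{n}J(\A_K)/nJ(\A_K)$. This step uses that $J(K)$ is finitely generated, which holds over a global function field with finite field of constants even when the $\Kbar/k$-trace of $J$ is nonzero. The theorem is thereby reduced to the ``adelic Mordell--Lang'' assertion $\iota(X(\A_K))\cap\overline{J(K)}=\iota(X(K))$.

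Splitting the completion $\widehat{\Z}=\Z_{p'}\times\Z_{p}$, I would handle the prime-to-$p$ part as in Poonen--Voloch: the Abel--Jacobi image of a curve of genus at least $2$ generates $J$, hence is not contained in any translate of a proper abelian subvariety, so the Mordell--Lang theorem in positive characteristic (Hrushovski; Abramovich--Voloch) forces the prime-to-$p$ part of the intersection down to $\iota(X(K))$. The hard part will be the $p$-primary part: I must rule out an adelic point of $X$ that is infinitely $p$-divisible relative to $J(K)$ in the local topologies yet is not rational. Here the descent machinery is of no help, because $J[p^{n}]$ is not \'etale, and indeed the conclusion genuinely fails for isotrivial abelian varieties; nonisotriviality of $X$ --- equivalently, by Torelli, of $J$ --- is exactly what is needed, and the required $p$-divisibility input for nonisotrivial abelian varieties over $K$ is what R\"ossler's appendix supplies. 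Feeding this into the $p$-adic analysis removes the $p$-primary discrepancy; combined with the prime-to-$p$ part this gives $\iota(X(\A_K))\cap\overline{J(K)}=\iota(X(K))$, and unwinding the torsor $T$ yields $X(\A_K)^{\Br}=X(K)$. The main obstacle, throughout, is this $p$-primary analysis.
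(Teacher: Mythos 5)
Your opening reductions (pass to the Jacobian or its Albanese torsor, replace the Brauer set by the set of adelic points surviving abelian descent, and aim for an ``adelic Mordell--Lang'' statement) match the paper's high-level strategy, with two caveats: the paper passes to a finite separable extension trivialising the Albanese torsor (or uses restriction of scalars, cf.\ Remark~\ref{rem:Poonen}) rather than working on the torsor itself, and --- more importantly --- the unconditional inclusions from \cite[Theorem E]{PoonenVoloch} are $\overline{J(K)}\subset J(\A_K)^{\Br}\subset\widehat{\Sel}(J)$, not equalities, so the set you must intersect with $X(\A_K)$ is the possibly larger group $\widehat{\Sel}(J)$, not the closure $\overline{J(K)}$.

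The genuine gap is in how you propose to prove the adelic statement. Your decomposition into a prime-to-$p$ part ``handled by Mordell--Lang'' and a $p$-primary part ``handled by R\"ossler's appendix'' is not the decomposition that works, and the first half would fail as stated: the Mordell--Lang theorem controls $X(\Kbar)\cap\Gamma$ for a finite-rank subgroup $\Gamma\subset J(\Kbar)$, whereas here one must control $X(\A_K)\cap\overline{J(K)}$, whose elements are adelic rather than geometric points; finiteness of $X(\Kbar)\cap J(K)$ says nothing about the adelic closure (this is exactly why the number-field analogue remains open even though Faltings proved Mordell--Lang there). The paper's actual dichotomy is \emph{Zariski-dense} versus \emph{supported on a finite subscheme}. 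The heart of the argument, absent from your proposal, is Proposition~\ref{prop:main}: a Zariski-dense adelic point surviving $p^n$-descent for all $n$ forces $X$ to be definable over $K^{p^n}$ for every $n$ (via the factorization of $[p^n]$ through Frobenius and Verschiebung together with the derivation argument of Abramovich--Voloch and Voloch), hence isotrivial; this is where nonisotriviality enters, and it is a $p$-power descent statement, not a divisibility statement about points. R\"ossler's isogeny $g$ with $\ker(g)(K^{\textup{sep}})=0$ plays a different role than you assign it: it has nothing to do with nonisotriviality and is used in the remaining case, where the adelic point is supported on a finite subscheme $Z$, to prove $Z(\A_K)\cap\widehat{\Sel}(A)=Z(K)$ by substituting $g$ for multiplication by $p$ in \cite[Proposition 5.3]{PoonenVoloch} (Proposition~\ref{prop:ZsubA} and Lemma~\ref{lem:details}). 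Without Proposition~\ref{prop:main}, or the Hrushovski-based input from \cite{MR1333294} that it replaces, your outline cannot rule out a Zariski-dense adelic point in the intersection.
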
 

We refer the reader to \cite{PoonenVoloch} for the definition of the Brauer-Manin obstruction and the relevant background in this context. Theorem~\ref{thm:BM} is proved in that paper for $X$ contained in an abelian variety $A$ such that  $A(K^{\textup{sep}})[p^\infty]$ is finite and no geometric isogeny factor of $A$ is isotrivial. That result holds more generally for any `coset free' subvariety of an abelian variety over $K$. We remove the hypotheses on an abelian variety containing $X$, but our proof does not immediately extend to higher dimensional subvarieties of abelian varieties.

As in \cite{PoonenVoloch} our results are a consequence of related results concerning adelic intersections whose connection to the Brauer-Manin obstruction was first observed in \cite{Scharaschkin} for curves over number fields.

\begin{Theorem}\label{thm:MW}
	Suppose $X \subset A$ is a proper smooth curve of genus at least $2$ contained in an abelian variety $A$ over a global field $K$ of characteristic $p > 0$. Then $X(\A_k) \cap \overline{A(K)} = X(K)$, where $\overline{A(K)}$ denotes the topological closure of $A(K)$ in $A(\A_K)$.
\end{Theorem}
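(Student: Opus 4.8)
The plan is a Mordell--Weil sieve argument, in the spirit of Scharaschkin and of \cite{PoonenVoloch}, arranged so as to require no hypothesis on $A$. One inclusion is trivial: a point of $X(K)$ lies in $A(K)\subseteq\overline{A(K)}$ and its diagonal image lies in $X(\mathbb A_K)$, so it suffices to prove $X(\mathbb A_K)\cap\overline{A(K)}\subseteq X(K)$.

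I would begin by fixing the arithmetic setup. By the Lang--N\'eron theorem $\Gamma:=A(K)$ is finitely generated, and since $A$ is proper $A(\mathbb A_K)=\prod_v A(K_v)$ is a compact group, so $\overline{\Gamma}:=\overline{A(K)}$ is a compact subgroup of it. Spreading $X\subseteq A$ out to a closed immersion $\mathcal X\hookrightarrow\mathcal A$ of smooth proper schemes over a dense open $U$ of the smooth proper curve with function field $K$, one has for each closed point $v\in U$ a reduction map $\rho_v\colon A(K_v)=\mathcal A(\mathcal O_v)\to\mathcal A(\mathbb F_v)$ carrying $X(K_v)=\mathcal X(\mathcal O_v)$ into $\mathcal X(\mathbb F_v)$. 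Let $(P_v)_v\in X(\mathbb A_K)\cap\overline{\Gamma}$. Because each $\mathcal A(\mathbb F_v)$ is finite and discrete, the image of $\overline{\Gamma}$ in it is exactly $\rho_v(\Gamma)$; more usefully, for every finite set $T$ of closed points of $U$ the image of $(P_v)$ in $\prod_{v\in T}\mathcal A(\mathbb F_v)$ lies in the diagonal image of $\Gamma$, and, applying the same reasoning to the finite quotients $A(K_v)/nA(K_v)$, for every integer $n\geq 1$ the image of $(P_v)$ in $\prod_v A(K_v)/nA(K_v)$ lies in the image of $\Gamma$. These congruences, together with the constraint $\rho_v(P_v)\in\mathcal X(\mathbb F_v)$ for all $v\in U$, are the raw material of the sieve.

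Next I would turn these congruences into rationality. The geometric input powering the sieve is a finiteness statement of Mordell--Lang type: since $X$ has genus at least $2$, its Albanese image is coset-free, and one wants the set of $x\in X(\Kbar)$ with $nx\in\Gamma$ for some $n\geq 1$ to be finite --- a statement available over function fields of characteristic $p$ via the work on the Mordell--Lang conjecture in positive characteristic (Hrushovski; Abramovich--Voloch; R\"ossler), with the known caveat that the isotrivial situation must be treated separately. Granting such a finiteness, one argues that $(P_v)$ must, place by place and compatibly across finite sets of places, match the reduction of one of these finitely many points; and if $(P_v)\notin X(K)$ one derives a contradiction by choosing a modulus $n$ and a finite witnessing set $T$ separating $(P_v)$ from that finite set, so that any $Q\in\Gamma$ with the same image as $(P_v)$ in $\prod_{v\in T}A(K_v)/nA(K_v)$ is forced to reduce outside $\mathcal X$ at a further, carefully chosen place $v\notin T$, contradicting $\rho_v(P_v)\in\mathcal X(\mathbb F_v)$. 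For the prime-to-$p$ part this proceeds essentially as in \cite{PoonenVoloch}, using the Kummer maps $A(K_v)/\ell^m\to H^1(K_v,A[\ell^m])$ and the fact that for $\ell\neq p$ one can find many places at which Frobenius acts on $T_\ell A$ with no eigenvalue $1$ (guaranteed by the Weil bounds).

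The step I expect to be the main obstacle --- and the one for which the appendix is needed --- is the $p$-primary part of the sieve. Over a function field of characteristic $p$ \emph{every} place has residue characteristic $p$, so for $v\in U$ the group $A(K_v)$ is an extension of the finite group $\mathcal A(\mathbb F_v)$ by the formal group $\widehat{\mathcal A}(\mathfrak m_v)$, which is a pro-$p$ group of infinite rank; hence the $p$-part of the reduction maps and of $\overline{\Gamma}$ is not controlled by the usual ($\ell\neq p$) Kummer--Selmer formalism. This is exactly why \cite{PoonenVoloch} imposed that $A(K^{\mathrm{sep}})[p^\infty]$ be finite and that $A$ have no isotrivial geometric isogeny factor. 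Removing both hypotheses requires a structural result controlling the $p$-divisible part of $A(K)$ and its behaviour under reduction, which is what the appendix supplies; with that in hand, the $p$-part of the sieve can be run in parallel with the prime-to-$p$ part, and combining the two yields $(P_v)\in X(K)$.
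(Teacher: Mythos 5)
Your proposal correctly identifies the family of ideas (a Scharaschkin-type argument via reductions and descent, with the $p$-primary part as the crux), but at the two places where the theorem is actually hard it either defers to an unproved step or points to the wrong tool, so there is a genuine gap. First, the sieve contradiction you describe --- ``choosing a modulus $n$ and a finite witnessing set $T$\dots so that any $Q\in\Gamma$ with the same image as $(P_v)$ is forced to reduce outside $\mathcal X$ at a further, carefully chosen place $v\notin T$'' --- is precisely the step that nobody knows how to carry out in general (over number fields it is the open Mordell--Weil sieve conjecture), and you give no mechanism for producing that place. The paper never runs such a sieve. Instead it observes that $\overline{A(K)}\subset\widehat{\Sel}(A)$, so a point of $X(\A_K)\cap\overline{A(K)}$ survives $p^n$-descent for all $n$, and then proves (Proposition~\ref{prop:main}) that a \emph{Zariski dense} adelic point surviving $p^n$-descent for all $n$ forces $X$ to be definable over $K^{p^n}$ for every $n$, hence isotrivial. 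The proof of that proposition --- factoring multiplication by $p^n$ through Frobenius and Verschiebung, and differentiating the equations of the reduced intermediate torsor against a derivation with kernel $K^p$, following \cite{AV,V} --- is the first key new ingredient, and nothing in your outline plays its role. Your appeal to Mordell--Lang finiteness of $\{x\in X(\Kbar):nx\in\Gamma\}$ does not substitute for it, because the obstruction is not the division hull of $\Gamma$ but the much larger pro-$p$ closure of $\Gamma$ inside $\prod_v A(K_v)$, whose intersection with $X$ is not controlled by Mordell--Lang.

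Second, you assign the appendix the job of ``controlling the $p$-divisible part of $A(K)$ and its behaviour under reduction'' so that the $p$-part of the sieve can be run; that is not what it does, and no version of the sieve is run with it. The appendix produces an \'etale isogeny $f:A\to B$ and an isogeny $g:B\to B$ of degree $>1$ with $\ker(g)(K^{\mathrm{sep}})=0$, and this is used only \emph{after} Proposition~\ref{prop:main} has confined the adelic point to a finite subscheme $Z\subset X$: one then proves $Z(\A_K)\cap\overline{A(K)}=Z(K)$ (Proposition~\ref{prop:ZsubA}) by running the $g$-adic Selmer argument of \cite[Proposition 5.3]{PoonenVoloch} with $g$ in place of multiplication by $p$, the point being that $B(K^{\mathrm{sep}})[g^\infty]=0$ makes $\Sel^{(g)}(B)\to B(K_v)^{(g)}$ injective via inflation--restriction. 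So the correct architecture is: descent $\Rightarrow$ not Zariski dense $\Rightarrow$ finite subscheme $\Rightarrow$ Selmer injectivity for R\"ossler's isogeny; your architecture (finiteness from Mordell--Lang, then a place-by-place sieve) skips the first reduction and leaves the decisive step unproved. One further point: the nonisotriviality of $X$ is genuinely needed for Proposition~\ref{prop:main} (and is used in the paper's own proof of this theorem), so your remark that ``the isotrivial situation must be treated separately'' is not a caveat you can set aside --- without that hypothesis the statement is not established by either argument.
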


We follow the strategy of the proof in \cite{PoonenVoloch}, but there are two new ingredients allowing us to remove all hypotheses on an abelian variety containing $X$. The first, appearing as Proposition~\ref{prop:main} below, is based on ideas in the proof of the Mordell-Lang Conjecture appearing in \cite{AV,V}. This replaces the input in \cite{PoonenVoloch} from \cite{MR1333294} which relies heavily on model theory and requires assumptions on the Jacobian of $X$. The second new ingredient is an isogeny constructed by R\"ossler in the appendix to this paper. We use this instead of multiplication by $p$ in some of the arguments appearing in \cite{PoonenVoloch} to prove Proposition~\ref{prop:ZsubA}. This removes the need for the hypothesis on $A(K^{\textup{sep}})[p^\infty]$ in~\cite[Proposition 5.3]{PoonenVoloch} and elsewhere. 

The theorems above are expected to hold (in a slightly modified form) for any closed subvariety of an abelian variety over a global field. This was originally posed as a question in the case of curves over number fields by Scharaschkin \cite{Scharaschkin} and, independently, by Skorobogatov \cite{Skorobogatov}. It was later stated as a conjecture for curves over number fields in \cite{Poonen} and \cite{Stoll}. The number field case has seen little progress and remains wide open. Building on \cite{PoonenVoloch}, this paper settles the function field analogue of these conjectures for nonisotrivial curves of genus $\ge 2$. Some partial results toward the conjecture in the isotrivial case are given in \cite{CV,CV2}, but this case too remains open.

\section{Zariski dense adelic points surviving $p^{\infty}$-descent}

In this section we assume $X \subset A$ is a proper smooth curve of genus $\ge 2$ contained in an abelian variety $A$ over $K$.

\begin{Definition}
	Let $N\ge 1$ be an integer. An $N$-covering of a subvariety $X \subset A$ of an abelian variety $A$ over $K$ is an fppf-torsor $Y\to X$ under the $N$-torsion subgroup scheme $A[N]$ such that the base change of $Y\to X$ to $K^\textup{sep}$ is isomorphic to the pull back of multiplication by $N$ on $A$. An adelic point on $X$ is said to survive $N$-descent if it lifts to an adelic point on some $N$-covering of $X$.
\end{Definition}

\begin{Definition}
An adelic point $(P_v)_v \in X(\A_K)$ is called Zariski dense if
for any proper closed subvariety $Y \subsetneq X$, there exists $v$ such that
$P_v \notin Y$.
\end{Definition}

\begin{Proposition}\label{prop:main}
Suppose $X \subset A$ is a proper smooth curve of genus at least $2$ contained in an abelian variety $A$ over a global field $K$ of characteristic $p > 0$. If there is a Zariski dense adelic point on $X$ which survives $p^n$-descent for all $n \ge 1$, then $X$ is isotrivial.
\end{Proposition}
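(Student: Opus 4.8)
The plan is to exploit the Zariski density hypothesis to bootstrap information about the reduction behavior of $X$ at many places, following the strategy from the proofs of Mordell--Lang in \cite{AV, V}. First I would fix a model $\mathcal{X} \subset \mathcal{A}$ over an open subscheme of the smooth projective curve with function field $K$, so that the given Zariski dense adelic point $(P_v)_v$ has integral components at almost all places $v$, reducing to points $\bar P_v$ on the special fibers $\mathcal{X}_v$. The surviving-$p^n$-descent hypothesis says that for each $n$ there is a $p^n$-covering $Y_n \to X$ with an adelic point lying above $(P_v)_v$; concretely, at each place $v$ this means the local point $P_v$ lies in the image of multiplication by $p^n$ on $A(K_v)$ twisted by a suitable cohomology class, so the reductions $\bar P_v$ are highly $p$-divisible in $\mathcal{A}_v(\kappa(v))$ up to the bounded prime-to-$p$ and component-group contributions.

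Next I would extract from this the key local consequence: at a place $v$ of good reduction, the Frobenius $\phi_v$ acting on the reduction $\mathcal{A}_v$ has the property that $\bar P_v$ is killed by an operator of the form $(\phi_v - a)$ for an integer $a$ tied to $p$-divisibility (one uses that on an abelian variety over a finite field, an infinitely-$p$-divisible point must be $0$ unless it is supported in the part where Frobenius acts as a $p$-adic unit times something of bounded height — here is where the Frobenius/Verschiebung decomposition enters). The Zariski density hypothesis then forces these Frobenius constraints to hold simultaneously at a Zariski dense set of specializations of $X$, i.e. the subvariety of $\mathcal{X}$ where the constraint fails is not all of $\mathcal{X}$ for any single $n$, and letting $n \to \infty$ pins down $X$ itself. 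The upshot should be that $X$, viewed over $\bar K$, is fixed by a lift of Frobenius (or a power thereof composed with a translation) — and a curve of genus $\ge 2$ admitting such a correspondence with itself must be defined over $\bar{\mathbb{F}}_p$, i.e.\ isotrivial. This last implication can be packaged via the theorem of de Franchis or directly via the fact that a nonisotrivial curve has finitely many $\bar K/\bar{\mathbb{F}}_p$-trace-related automorphisms.

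The main obstacle I expect is the passage from the \emph{adelic} $p^n$-divisibility data to a \emph{global} (geometric) statement about $X$: individually each place only gives a congruence condition, and one needs the Zariski density of $(P_v)_v$ to convert "the bad locus is proper for every $n$" into "$X$ is contained in the intersection, which shrinks to something defined over the constant field." Carrying this out requires uniformity — the degrees of the exceptional subvarieties must be bounded independently of $n$, or else an intersection argument must be run in a Hilbert scheme / via heights — and this is precisely the point where the ideas of \cite{AV, V} replace the model-theoretic input of \cite{MR1333294}. A secondary technical point is handling the finitely many places of bad reduction and the prime-to-$p$ component groups of $\mathcal{A}_v$, which contribute bounded error terms that do not affect the limiting argument but must be tracked. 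Once the geometric Frobenius-invariance of $X$ is established, concluding isotriviality is standard.
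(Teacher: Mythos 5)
There is a genuine gap here, and it sits exactly where you flag your ``main obstacle'': the passage from place-by-place divisibility data to a global geometric statement is the entire content of the proposition, and the mechanism you propose for it does not work. First, the local constraint you extract is essentially vacuous: on an abelian variety over a finite field the entire prime-to-$p$ part of the group of rational points is infinitely $p$-divisible, so ``$\bar P_v$ is highly $p$-divisible in $\mathcal{A}_v(\kappa(v))$'' excludes almost nothing, and no operator of the form $(\phi_v - a)$ killing $\bar P_v$ comes out of it. (Moreover, surviving $p^n$-descent only puts $P_v$ in the image of a \emph{twist} of multiplication by $p^n$, not in $p^nA(K_v)$ itself.) Second, the uniformity in $n$ that you would need in order to intersect the exceptional loci is not supplied by \cite{AV,V}: those papers do not argue via reductions at finite places at all, so the step you defer to them is not available.

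What they supply instead, and what the paper actually uses, is a derivation argument carried out at the generic fibre. Multiplication by $p^n$ factors as Verschiebung after the $n$-fold Frobenius $F^n\colon A\to A^{(p^n)}$, so the $p^n$-covering to which the adelic point lifts factors through a torsor $Y\to X$ under $\ker(V^n)$, and the lifted adelic point $(R_v)_v$ on $Y_{\red}$ has every component in $A^{(p^n)}(K_v^{p^n})$. For a derivation $\delta$ of $K$ with kernel $K^p$ and any function $f$ vanishing on $Y_{\red}$, one gets $f^{\delta}(R_v)=0$ for all $v$; Zariski density of $(R_v)_v$ then forces $f^{\delta}$ to vanish on $Y_{\red}$, so $\delta$ extends to a vector field on a spreading out and \cite[Lemma 1]{V} gives that $Y_{\red}$ is defined over $K^p$ (iterating, over $K^{p^n}$). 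This is Lemma~\ref{lem:Frob}. One then pushes definability over $K^{p^n}$ back down from $Y_{\red}$ to $X$ through the connected--\'etale factorization of $V^n$ (Lemmas~\ref{lem:connected} and~\ref{lem:etale}), and concludes that $X$ is definable over $K^{p^n}$ for every $n$, hence isotrivial by the discussion in \cite[Section 0]{MR3618571}. Note that the endgame is also different from yours: no ``lift of Frobenius fixing $X$'' is ever established and no de Franchis argument is needed, only the standard fact that definability over $K^{p^n}$ for all $n$ forces isotriviality.
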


The proof of this proposition will be given at the end of this section.

\begin{Definition}
	Let $L \subset K$ be a subfield. We say that $X$ is \defi{defined over} $L$ if there exists $X_0/L$ such that $X \simeq X_0\times_K L$. We say that $X$ is \defi{definable over} $L$ if there exists $X_0/L$ such that $X \times_K \Kbar \simeq X_0 \times_L \Kbar$, where $\Kbar$ denotes an algebraic closure of $K$ containing $L$.
\end{Definition}

For an abelian variety $A/K$, multiplication by $p^n$ factors as
\[
	A \stackrel{F^n}{\to} A^{(p^n)} \stackrel{V^n}\to A\,,
\]
where $F^n$ and $V^n$ are the $n$-fold compositions of the absolute Frobenius and Verschiebung isogenies.

\begin{Lemma}\label{lem:Frob}
	Suppose $X$ contains a Zariski dense adelic point which lifts to a $p^n$-covering $Y' \to X$ and let $Y \to X$ be the torsor under $\ker(V^n:A^{(p^n)} \to A)$ through which it factors. Then $Y_{\red}$ is geometrically reduced and definable over $K^{p^n}$.
\end{Lemma}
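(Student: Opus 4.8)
The plan is to exploit the factorization $p^n = V^n \circ F^n$ together with the fact that a $p^n$-covering becomes, over $K^{\sep}$, the pullback of $[p^n]\colon A\to A$. Write the $p^n$-covering as $Y' \to X$; by hypothesis it factors through a torsor $Y \to X$ under $\ker(V^n\colon A^{(p^n)}\to A)$, the factorization mirroring $[p^n] = V^n F^n$. The key structural observation is that $\ker(V^n)$ is a finite group scheme whose Cartier dual is \'etale-local in a way dual to $\ker(F^n)$; more usefully, $Y' \to Y$ is a torsor under $\ker(F^n\colon A \to A^{(p^n)})$, which is infinitesimal. Concretely, $Y'$ is obtained from $Y$ by a purely inseparable operation, so $Y'_{\red}$ and $Y_{\red}$ should agree up to the relevant Frobenius twist; the crux is to identify $Y_{\red}$ with (a twist of) $X^{(1/p^n)}$, the model of $X$ over $K^{p^n}$, which is where definability over $K^{p^n}$ comes from.

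First I would record that a $p^n$-covering $Y'\to X$, being geometrically isomorphic to the pullback of $[p^n]$, has the property that over $K^{\sep}$ the composite $Y' \to X \hookrightarrow A$ followed by nothing is covered by $[p^n]$; the Frobenius part $F^n$ of $[p^n]$ is what forces inseparability. I would then use the existence of the Zariski dense adelic point on $Y'$: its local components give $K_v$-points (for each place $v$) on $Y'$, hence on $Y$, and in particular $Y$ (and $Y'$) is nonempty over each completion. The point of invoking a Zariski dense adelic point — rather than just an adelic point — is presumably to guarantee that $Y_{\red}$ is geometrically irreducible (the geometric components of $Y$ are permuted by Galois, and a Zariski dense adelic point cannot be supported on a proper subvariety, ruling out the degenerate splitting), so that $Y_{\red}$ is a nice curve and not a disjoint union. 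Next, I would analyze $Y_{\red}$: since $\ker(V^n)$ has connected-\'etale sequence with \'etale quotient $\ker(V^n)^{\et}$, the reduced subscheme $Y_{\red}$ is a torsor under $\ker(V^n)^{\et}$ over $X$ — but the relevant point is rather that the infinitesimal part of the covering "is" the $n$-fold relative Frobenius, so $Y_{\red}$ descends: it is the base change of a curve defined over $K^{p^n}$. I would make this precise by checking that the $n$-fold relative Frobenius $X \to X^{(p^n)}$ identifies the function field inclusion $K^{p^n} \subset K$ at the base, and that the covering $Y \to X$, after reduction, is pulled back from $X^{(p^n)}$, whence $Y_{\red}$ is definable (indeed defined, after the twist is accounted for) over $K^{p^n}$.

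The main obstacle I anticipate is the claim that $Y_{\red}$ is \emph{geometrically reduced}: a priori $Y$ could have embedded or nilpotent structure that does not disappear after a single pass to the reduced subscheme over $K$, i.e.\ $(Y_{\red})\times_K \Kbar$ could fail to be reduced. Resolving this requires knowing that the nonreducedness of $Y$ comes \emph{entirely} from the infinitesimal group scheme $\ker(F^n)$ acting in the fiber direction of $A \to A^{(p^n)}$, and that modding out by the nilradical exactly undoes this Frobenius — which is where one must use that $A^{(p^n)}$ is an abelian variety over the perfect-enough completions and that $X$ itself is smooth, hence geometrically reduced, so its Frobenius twist $X^{(p^n)}$ is too. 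The argument should run: $Y \to X$ is a torsor under $\ker(V^n)$, the connected part $\ker(V^n)^0$ has order a power of $p$ and is the "Frobenius kernel" dual to the situation for $F^n$; the reduced quotient structure on $Y$ is then a torsor under the \'etale part $\ker(V^n)^{\et}$, which is a twist of a constant group scheme, and such torsors over a smooth geometrically reduced base are geometrically reduced. I would then conclude by descent: an \'etale torsor that is geometrically a pullback of $[p^n]$ is, after removing the separable/\'etale $\ker(V^n)^{\et}$-part, literally the $n$-fold Frobenius twist, hence definable over $K^{p^n}$. Pinning down the exact bookkeeping between "defined over" and "definable over" $K^{p^n}$ — the twist may only be trivializable after a separable extension — is the remaining delicate point, and I expect the statement is phrased with "definable" precisely to sidestep it.
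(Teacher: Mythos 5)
There is a genuine gap: your plan tries to extract the descent to $K^{p^n}$ from the group-scheme structure of the covering alone, whereas the Zariski dense adelic point is the engine of the descent in the paper's proof, not (as you guess) a device for ensuring geometric irreducibility. Concretely, you assert that $Y_{\red}$ is a torsor under the \'etale quotient of $\ker(V^n)$ and is therefore ``literally the $n$-fold Frobenius twist, hence definable over $K^{p^n}$.'' Neither step holds. The quotient map $Y \to Y/\ker(V^n)^{0}$ is a torsor under an infinitesimal group scheme, hence a purely inseparable homeomorphism, but passing to $Y_{\red}$ does not collapse it to an isomorphism: a $\mu_p$-torsor $y^p=f$ over a curve is already reduced while still being a degree-$p$ inseparable cover of the base. (This is exactly why the proof of Proposition~\ref{prop:main} must feed the generically purely inseparable map $Y_{\red}\to X_e$ into Lemma~\ref{lem:connected} rather than identifying the two.) More decisively, the conclusion cannot follow from structure alone: if Lemma~\ref{lem:Frob} held for an arbitrary torsor under $\ker(V^n)$ with no input from the adelic point, then the argument of Proposition~\ref{prop:main} would show that \emph{every} smooth curve of genus $\ge 2$ in an abelian variety is definable over $K^{p^n}$ for all $n$, i.e.\ isotrivial, which is absurd. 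So the adelic point must enter the descent itself, and your proposed use of it is in the wrong place.

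The mechanism missing from your plan is the following. A lift $(Q_v)_v$ of the adelic point to $Y'$ factors through $Y'_{\red}$ because each $\Spec(K_v)$ is reduced, and its image $(R_v)_v$ in $Y_{\red}(\A_K)$ is again Zariski dense; since $(R_v)_v$ lies in $F^n(A(\A_K))$, each $R_v$ has coordinates in $K_v^{p^n}$. Hence for any function $f$ vanishing on an affine open of $Y_{\red}$ and any derivation $\delta$ of $K$ with kernel $K^p$, one gets $f^{\delta}(R_v)=0$ for all $v$, and Zariski density of $(R_v)_v$ on $Y_{\red}$ forces $f^{\delta}$ to vanish identically there; thus $\delta$ extends to a vector field on a spreading out of $Y_{\red}$, and one concludes that $Y_{\red}$ is defined over $K^{p^n}$ and geometrically reduced via \cite{V}*{Lemma 1} (or the proof of \cite{AV}*{Lemma 1}), iterating $n$ times. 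Without this step both the definability over $K^{p^n}$ and the geometric reducedness remain unproved in your proposal.
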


\begin{proof}
	Let $(P_v)_v \in X(\A_K)$ be the given adelic point and let $Y' \to X$ be the $p^n$-covering to which $(P_v)_v$ lifts. By passing to a separable extension of $K$ (which is harmless because $(K^{\textup{sep}})^p \cap K = K^p$ and \cite[Lemma 1.5.11]{MR2708514}) we can assume $Y' \to X$ is the pullback of multiplication by $p^n$ on $A$. In particular, it factors through the $n$-fold Frobenius morphism $F^n : A \to A^{(p^n)}$ and we have a commutative diagram with $Y$ the torsor in the statement,
	\[
		\xymatrix{
			Y'_{\red} \ar[r]\ar[d] & Y_{\red} \ar[r]\ar[d] & X \ar@{=}[d]\\
			Y' \ar[r]\ar[d] & Y \ar[r]\ar[d] & X \ar[d] \\
			A \ar[r]^{F^n} & A^{(p^n)} \ar[r]^{V^n}& A\,.
		}
	\]
	Let $(Q_v)_v \in Y'(\A_K)$ denote a lift of $(P_v)_v$. For any $v$, the point $Q_v : \Spec(K_v) \to Y'$ factors through $Y'_{\red}$, because $\Spec(K_v)$ is reduced. So $(Q_v)_v$ is also a Zariski dense adelic point on $Y'_{\red}$. Its image $(R_v)_v$ in $Y_{\red}(\A_K)$ is a Zariski dense adelic point and by commutativity of the diagram the image of $(R_v)_v$ in $A^{(p^n)}$ lies in $F^n(A(\A_K))$. In particular, for each $v$, the point $R_v$ lies in $A^{(p^n)}(K_v^{p^n})$. It then follows from the proof of \cite[Lemma 1]{AV} that $Y_{\red}$ is defined over $K^{p^n}$ and is geometrically reduced. Below is an alternative argument using \cite{V}, in particular, the last paragraph.

We show that that $Y_{\red}$ is defined over $K^{p^n}$ and is geometrically reduced. Assume $n=1$, which is enough, as the argument can be repeated $n$ times.
Let $U$ be an affine open subset of $Y_{\red}$ and $f$ a function defined on an affine open set of
$A^{(p^n)}$ which vanishes on $U$. We have that $f(R_v)=0$ and differentiating this
equation with respect to a derivation $\delta$ on $K$ with kernel $K^p$, gives
$f^{\delta}(R_v) = 0$. Since $(R_v)_v$ is Zariski dense on $Y_{\red}$, we 
conclude that $f^{\delta}$ also vanishes on $U$. This means that $\delta$
extends to a vector field on a spreading out of $Y_{\red}$ and we conclude
via \cite[Lemma 1]{V}.
\end{proof}

\begin{Remark}
	From the above proof, if $Y_{\red}$ is not defined over $K^{p}$, some $f^{\delta}$ does not vanish on $Y_{\red}$ and the equation $f^{\delta}=0$ defines a proper Zariski closed subset containing $(R_v)_v$.
\end{Remark}

\begin{Lemma}\label{lem:etale}
	If $X' \to X$ is a torsor under an \'etale group scheme and $X'$ is definable over $K^{p^n}$, then $X$ is definable over $K^{p^n}$.
\end{Lemma}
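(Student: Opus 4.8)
The plan is to reduce ``definable over $K^{p^n}$'' to the extension of a derivation --- the same dictionary that underlies the proof of Lemma~\ref{lem:Frob} --- and then to \emph{descend} that extension along the finite étale cover $\pi\colon X'\to X$. Unlike in Lemma~\ref{lem:Frob}, there is no adelic point or Zariski-density input available, so the argument has to be purely geometric; the decisive fact will be that a smooth proper curve of genus at least $2$ has no nonzero global vector fields, which will force the extension chosen on $X'$ to be invariant for the torsor action and hence to descend to $X$. I would first treat the case $n=1$ and then obtain the general case by iterating the argument, just as in Lemma~\ref{lem:Frob}.

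Fix a derivation $\delta$ of $K$ with $\ker\delta=K^p$. I would use the equivalence (established in \cite{AV,V} and recalled in the proof of Lemma~\ref{lem:Frob}) that a smooth proper $K$-variety $V$ is definable over $K^p$ if and only if $\delta$ extends to a vector field on a spreading out of $V$: there is a finitely generated regular $\F_p$-subalgebra $R\subseteq K$ with $\operatorname{Frac}R=K$ to which $\delta$ restricts (after shrinking $\Spec R$) as a derivation $\partial$, a smooth proper model $\mathcal V\to\Spec R$ of $V$, and a derivation of $\mathcal O_{\mathcal V}$ lifting $\partial$. Applying this to $X'$, the hypothesis supplies such an $R$, a smooth proper model $\mathcal X'\to\Spec R$ of $X'$, and a lift $D$ of $\partial$ on $\mathcal X'$; it then suffices to produce a lift of $\partial$ to a model of $X$.

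After further shrinking $\Spec R$, I would spread the torsor out to a finite étale $\mathcal G$-torsor $\tilde\pi\colon\mathcal X'\to\mathcal X$ with $\mathcal G\to\Spec R$ finite étale and $\mathcal X\to\Spec R$ a smooth proper model of $X$. Because $X$ is a smooth proper curve of genus $\ge 2$, every geometric fibre of $p\colon\mathcal X'\to\Spec R$ is a disjoint union of smooth proper curves, each a connected finite étale cover of a curve of genus $\ge 2$ and hence of genus $\ge 2$ by Riemann--Hurwitz; so the space of global vector fields on each fibre vanishes, and therefore $p_*\mathcal T_{\mathcal X'/R}=0$ by cohomology and base change. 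Now for any local section $g$ of $\mathcal G$ the conjugate of $D$ by the corresponding automorphism $\rho_g$ of $\mathcal X'$ is again a derivation lifting $\partial$ (as $\rho_g$ is a morphism over $R$), so the difference of this conjugate with $D$ is a global section of $\mathcal T_{\mathcal X'/R}$, hence $0$; that is, $D$ commutes with the $\mathcal G$-action. Since $\tilde\pi$ is a $\mathcal G$-torsor we have $\mathcal O_{\mathcal X}=(\tilde\pi_*\mathcal O_{\mathcal X'})^{\mathcal G}$, and a derivation commuting with the $\mathcal G$-action carries $\mathcal G$-invariant sections to $\mathcal G$-invariant sections; thus $D$ restricts to a derivation of $\mathcal O_{\mathcal X}$ lifting $\partial$, and the equivalence above gives that $X$ is definable over $K^p$. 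This finishes $n=1$, and the general case follows by repetition.

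I expect the descent step to be the real obstacle. A chosen extension of $\delta$ to $X'$ need not be preserved by the torsor action, and the argument works only because the error term is constrained to lie in the group of global relative vector fields of the cover, which is forced to vanish precisely because that cover is assembled from curves of genus at least $2$ --- so the standing genus hypothesis of the section is genuinely used. Two smaller points to keep in mind are that $X'$ may be geometrically disconnected (its components are still curves of genus $\ge 2$, so this changes nothing) and that, in reducing to $n=1$, one must spread out and descend the cover and the group scheme along with $X$ and $X'$, which is routine.
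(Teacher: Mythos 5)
Your proof is correct and is essentially the argument the paper relies on: the paper's proof simply cites \cite[Lemma 2]{V} for Galois covers --- whose content is exactly your derivation-descent step, using that the conjugates of the lifted derivation differ by global relative vector fields, which vanish because the covering curves have genus $\ge 2$ --- and reduces to that case by trivialising the \'etale group scheme over a separable extension. Your write-up just inlines that cited lemma and works with the torsor directly instead of first passing to a constant group, which is a packaging difference rather than a different route.
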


\begin{proof}
	\cite[Lemma 2]{V} proves this for Galois covers. This gives the result, since taking a separable extension to trivialise the
	Galois action on the \'etale group scheme is harmless.
\end{proof}

\begin{Lemma}\label{lem:connected}
	Suppose $Y_i \subset A_i$ are geometrically integral curves contained in abelian varieties $A_i$ over $K$, for $i = 1,2$. Suppose there is an isogeny $A_1 \to A_2$ restricting to a generically purely inseparable map $Y_1 \to Y_2$. If $Y_1$ and $A_1$ are definable over $K^{p^n},$ then $Y_2$ is definable over $K^{p^n}$.
\end{Lemma}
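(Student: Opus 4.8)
The plan is to identify $Y_2$, up to isomorphism of smooth projective models, with an iterated relative Frobenius twist of $Y_1$, and then to use that such twists preserve definability over $K^{p^n}$.

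Since $Y_1$ and $Y_2$ are proper curves, $f:=\varphi|_{Y_1}$ is finite, and by hypothesis the extension $K(Y_2)\subseteq K(Y_1)$ is purely inseparable, of degree $p^m$ say. (The \'etale part of $\ker\varphi$ is irrelevant here: only the purely inseparable hypothesis on $f$ is used.) Then $K(Y_1)^{p^m}\subseteq K(Y_2)$, hence $K(Y_1)^{p^m}K\subseteq K(Y_2)\subseteq K(Y_1)$. Because $K$ is a global function field we have $[K:K^p]=p$, and because $Y_1$ is geometrically reduced the extension $K(Y_1)/K$ is separably generated, so $[K(Y_1):K(Y_1)^p]=p^2$; a short degree count then gives $[K(Y_1):K(Y_1)^{p^m}K]=p^m$. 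Since also $[K(Y_1):K(Y_2)]=p^m$, this forces
\[
K(Y_2)=K(Y_1)^{p^m}K=K\!\bigl(Y_1^{(p^m)}\bigr),
\]
so that $f$ is, after normalising, the $m$-fold relative Frobenius $Y_1\to Y_1^{(p^m)}$. Finally, unwinding the definition of definability, if $Y_1\times_K\Kbar\simeq Y_1^0\times_{K^{p^n}}\Kbar$ with $Y_1^0/K^{p^n}$, then $Y_1^{(p^m)}\times_K\Kbar\simeq(Y_1^0)^{(p^m)}\times_{K^{p^n}}\Kbar$ with $(Y_1^0)^{(p^m)}$ a scheme over $K^{p^n}$ (indeed over $K^{p^{n+m}}$). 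Hence the smooth projective model of $Y_2$ is definable over $K^{p^n}$.

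I expect the real obstacle to be promoting this to a statement about $Y_2\subseteq A_2$ itself when $Y_2$ is not normal. This genuinely occurs: $Y_1$ can be tangent to $\ker(d\varphi)$ at finitely many points, which forces cuspidal singularities on $Y_2=\varphi(Y_1)$, while the function-field argument above only sees the normalisation. One must still show that the scheme structure of $Y_2$ inside $A_2$ descends to $K^{p^n}$, and this is the one place where the hypothesis that $A_1$ is definable over $K^{p^n}$ has to be used. I would try to handle it by arguing with vector fields as in \cite{V}: a vector field on a spreading-out of $Y_1$ lifting a derivation $\delta$ of $K$ with kernel $K^p$ exists because $Y_1$ is definable over $K^p$, it automatically preserves the subfield $K(Y_1)^pK=K(Y_2)$, and it pushes forward along $f$ to a rational vector field on $Y_2$; the difficulty is regularity at the singular points of $Y_2$, and to control this one would choose the vector field on $Y_1$ compatibly with a vector field on a spreading-out of $A_1$, available since $A_1$ is definable over $K^p$. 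Iterating $n$ times reduces the general case to $n=1$. Making this compatibility precise, and the non-normal bookkeeping in general, is the technical heart.
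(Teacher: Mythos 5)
Your function-field argument is correct and is a genuinely different (and more self-contained) route than the paper's, which simply invokes the argument of \cite[Theorem A(2)]{AV} and iterates. The degree count $[K(Y_1):K(Y_1)^{p^m}K]=p^m$ does identify $K(Y_2)$ with $K(Y_1^{(p^m)})$, and Frobenius twists are compatible with base change and with twisting a model over $K^{p^n}$, so the smooth projective model of $Y_2$ is definable over $K^{p^n}$. Note that in the one place the lemma is actually applied (the proof of Proposition~\ref{prop:main}), the target curve is $X_e$, which is \'etale over the smooth proper curve $X$ and hence smooth; there your first paragraph already closes the argument, and it does so without ever using the hypothesis that $A_1$ is definable over $K^{p^n}$.

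However, as a proof of the lemma as stated the proposal has a genuine gap, and it is exactly the one you flag yourself: $Y_2$ is only assumed geometrically integral, and when $Y_1$ is tangent to the infinitesimal kernel the image acquires unibranch singularities, so definability of the normalisation does not yield definability of $Y_2$ (the definition in the paper asks for an isomorphism of $Y_2\times_K\Kbar$ itself with a twisted model, and a singular curve is not $\Kbar$-isomorphic to its smooth model). Your proposed repair --- lift a derivation $\delta$ with kernel $K^p$ to a vector field on a spreading-out of $A_1$ tangent to $Y_1$, push it forward through the isogeny, and check regularity along the singular locus of $Y_2$ before concluding via \cite[Lemma 1]{V} --- is precisely the content of the argument in \cite[Theorem A(2)]{AV} that the paper cites, and it is where the hypothesis on $A_1$ enters. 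Since you explicitly defer this compatibility and the non-normal bookkeeping as ``the technical heart,'' the proof is incomplete at exactly the step that distinguishes the lemma from its birational shadow; either carry out that vector-field argument (or cite \cite{AV} for it), or weaken the statement to smooth $Y_2$, which is all the paper needs.
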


\begin{proof}
	Passing to a finite separable extension we can assume $Y_1$ is defined over $K^{p^n}$. In particular, $Y_1$ is defined over $K^p$, so the argument in \cite[Theorem A(2)]{AV} shows that $Y_2$ is defined over $K^p$. Replacing $K$ with $K^p$ and repeating $n$ times we find that $Y_2$ is defined over $K^{p^n}$.
\end{proof}

\begin{proof}[Proof of Proposition~\ref{prop:main}]
Let $P := (P_v)_v \in X(\A_K)$ be a Zariski dense adelic point that survives $p^n$-descent for all $n \ge 1$.

Let $n \ge 1$ and let $Y' \to X$ be a $p^n$-covering to which $P$ lifts. By Lemma~\ref{lem:Frob}, $Y' \to X$ factors through a torsor $Y \to X$ under the kernel of $V^n : A^{(p^n)} \to A$, with $Y_{\red}$ geometrically reduced and definable over $K^{p^n}$. We can factor $V^n = V_e \circ V_c$, with $V_c$ an isogeny whose kernel is a connected abelian $p$-group scheme and $V_e$ \'etale. Let $Y \to X_e \to X$ be the corresponding factorization of $Y \to X$. Since $X_e \to X$ is \'etale and $X$ is smooth, $X_e$ is geometrically integral. The isogeny $V_c$ restricts to a morphism $Y_{\red} \to X_e$ which is generically purely inseparable, so $X_e$ is definable over $K^{p^n}$ by Lemma~\ref{lem:connected}. Then $X$ is definable over $K^{p^n}$ by Lemma~\ref{lem:etale}.

Since $P$ survives $p^n$-descent for all $n$, we conclude that $X$ is definable over $K^{p^n}$ for all $n \ge 1$. This implies that $X$ is isotrivial  (see the discussion in \cite[Section 0]{MR3618571}).
\end{proof}

\section{Rational points on finite subschemes of abelian varieties}
	
\begin{Proposition}\label{prop:ZsubA} Let $Z \subset A$ be a finite subscheme of an abelian variety defined over a global function field $K$.  Then 
	\[
		Z(K) = Z(\A_K) \cap \overline{A(K)} = Z(\A_K) \cap A(\A_K)^{\Br}\,.
	\]
\end{Proposition}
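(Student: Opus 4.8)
The plan is to reduce the statement to a local-global principle for the finite scheme $Z$ combined with a descent argument using the Frobenius/Verschiebung factorization of the R\"ossler isogeny. First I would record the trivial inclusions: $Z(K) \subseteq Z(\A_K) \cap \overline{A(K)} \subseteq Z(\A_K) \cap A(\A_K)^{\Br}$, the second because $A(\A_K)^{\Br} \supseteq \overline{A(K)}$ always, and the first because a global point is in particular adelic and lies in the closure of $A(K)$. So everything comes down to showing $Z(\A_K) \cap A(\A_K)^{\Br} \subseteq Z(K)$. Since $Z$ is finite, an adelic point of $Z$ is a tuple $(z_v)_v$ with $z_v \in Z(K_v)$, and $Z(K)$ sits inside this as the diagonal; the content is that Brauer-Manin forces such a tuple to be globally constant and defined over $K$.

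The key step is to exploit the isogeny $\phi \colon A \to A'$ constructed by R\"ossler in the appendix, which plays the role that multiplication by $p$ played in \cite[Proposition 5.3]{PoonenVoloch} but without the hypothesis on $A(K^{\sep})[p^\infty]$. I would argue that a point in $Z(\A_K) \cap A(\A_K)^{\Br}$ survives $\phi$-descent, i.e.\ lifts to an adelic point on the $\phi$-twist of $Z$; this is the standard compatibility of the Brauer-Manin obstruction with torsors under finite group schemes (the evaluation pairing against $\Br$ kills classes pulled back from torsors). Iterating, the point survives descent by $\phi^n$ for all $n$, hence is infinitely $p$-divisible in an appropriate sense in each completion and globally. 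Because $Z$ is a \emph{finite} scheme, the images $Z^{(n)} \subset A^{(p^n)}$ (or the $\phi^n$-preimages) stabilize, and one uses that the intersection $\bigcap_n \phi^n(A(\A_K))$ meets $Z(\A_K)$ only in points that descend to $K$ — here one invokes the structure theory of $A(K_v)$ (finitely generated modulo a profinite part, with the pro-$p$ part controlled) together with the key finiteness input that makes the local-to-global step work, namely that the relevant Tate-Shafarevich-type obstruction for the finite scheme is killed.

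The main obstacle I expect is precisely this last local-global descent: showing that an adelic point of the finite scheme $Z$ which is "infinitely divisible by $\phi$" relative to $A$ must be rational. Concretely, one must control $\bigcap_n \im\!\big(A(\A_K) \xrightarrow{\phi^n} A(\A_K)\big) \cap Z(\A_K)$ and match it with $Z(K)$; the R\"ossler isogeny is engineered so that its iterates contract the "inseparable" part that obstructed the argument in \cite{PoonenVoloch}, but assembling the local conditions into a global rational point will require care with the adelic topology and with the fact that $\overline{A(K)}$ is only the closure of a finitely generated group. I would handle this by the same mechanism as in \cite{PoonenVoloch}: first establish the equality after intersecting with $\overline{A(K)}$ using that $A(K)$ is finitely generated (Lang-N\'eron) so its closure has a clean pro-$p$ structure, then upgrade from $\overline{A(K)}$ to $A(\A_K)^{\Br}$ by the descent compatibility. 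The novelty is entirely in replacing multiplication-by-$p$ with R\"ossler's isogeny to avoid the $p^\infty$-torsion hypothesis, so I would structure the write-up to isolate exactly where that substitution is used.
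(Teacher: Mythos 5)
Your overall strategy --- rerun the argument of \cite[Proposition 5.3]{PoonenVoloch} with R\"ossler's isogeny in place of multiplication by $p$ --- is the right one and matches the paper's, but as written there are two concrete gaps.

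First, the logic of your final step is backwards. You propose to ``first establish the equality after intersecting with $\overline{A(K)}$ \dots then upgrade from $\overline{A(K)}$ to $A(\A_K)^{\Br}$.'' Since $\overline{A(K)} \subset A(\A_K)^{\Br}$, proving the containment $Z(\A_K)\cap\overline{A(K)}\subset Z(K)$ for the \emph{smaller} set says nothing about the larger one. The paper instead works from the outset with the largest of the relevant sets: by \cite[Theorem E]{PoonenVoloch} one has $Z(K)\subset\overline{A(K)}\subset A(\A_K)^{\Br}\subset\widehat{\Sel}(A)$, so it suffices to prove $Z(\A_K)\cap\widehat{\Sel}(A)\subset Z(K)$, and all the claimed equalities follow at once.

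Second, you have not identified the mechanism by which R\"ossler's isogeny does its work, and you have misremembered its shape. The appendix produces not a single isogeny of $A$ but an \'etale isogeny $f:A\to B$ to a possibly different abelian variety, together with a self-isogeny $g:B\to B$ of degree $>1$ satisfying $\ker(g)(K^{\textup{sep}})=0$. The whole proof pivots on the fact that $B(K^{\textup{sep}})[g^M]=0$ forces, via inflation-restriction, the injectivity of $\Sel^{(g)}(B)\to B(K_v)^{(g)}$ for each place $v$; this is the precise replacement for the $p^\infty$-torsion hypothesis of \cite[Proposition 5.3]{PoonenVoloch}. Your ``infinitely divisible by $\phi$'' framing and the set $\bigcap_n \phi^n(A(\A_K))$ do not capture this: the divisibility that actually gets used is that of the differences $Q_v-Q_{v'}$ of rational points inside the finitely generated group $B(K)$, which forces them to be torsion. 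Moreover, since the conclusion is first obtained on $B$ for $W=f(Z)$, you need a step to descend back to $A$: the paper notes that for $P\in Z(\A_K)\cap\widehat{\Sel}(A)$ the difference $P-Q_v$ lies in $\ker(f)\cap\widehat{\Sel}(A)$, hence is torsion, hence lies in $A(K)$ by \cite[Lemma 5.1]{PoonenVoloch}. You also omit the preliminary reduction (harmless after a finite separable extension, as in \cite[Prop. 3.9]{PoonenVoloch}) to the case where $Z$ is a finite set of $K$-points and where the hypotheses of R\"ossler's construction ($A[n]$ constant for some $n$ prime to $p$, semiabelian N\'eron model) are satisfied.
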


\begin{proof}
	By~\cite[Theorem E]{PoonenVoloch} we have $Z(K) \subset \overline{A(K)} \subset A(\A_K)^{\Br} \subset \widehat{\Sel}(A).$ So it suffices to show that $Z(\A_K) \cap \widehat{\Sel}(A) \subset Z(K)$. As in the proof of \cite[Prop. 3.9]{PoonenVoloch}, it suffices to show that this holds after a finite separable extension, so we can assume that $Z$ consists of is a finite set of $K$-points. 
	
	Replacing $K$ by a further finite separable extension if needed, we can also assume that $A[n]$ is a constant group scheme for some $n$ prime to $p$ and that the N\'eron model of $A$ has semiabelian connected component. In the appendix by D. R\"{o}ssler it is shown that, under these hypotheses, there exists an \'etale isogeny $f:A \to B$ and an isogeny $g:B\to B$ of degree $> 1$ such that $\ker(g)(K^{\textup{sep}}) = 0$. Let $W = f(Z) \subset B$. If $B(K^{\textup{sep}})[p] = 0$, then \cite[Proposition 5.3]{PoonenVoloch} gives that $W(\A_K) \cap \widehat{\Sel}(B) = W(K) $. Working with the given endomorphism $g$ instead of multiplication by $p$, the argument there can be adapted to give the same conclusion (Details are given in Lemma~\ref{lem:details} below).	
	
	Now suppose $P \in Z(\A_K) \cap \widehat{\Sel}(A)$. It follows from the definition of the Selmer groups that $f\left(\widehat{\Sel}(A)\right) \subset \widehat{\Sel}(B)$. So $f(P) \in W(\A_K) \cap \widehat{\Sel}(B) = W(K)$. For any $v \in \Omega_K$, the $v$-adic component of $P$ is the image of some $Q_v \in Z(K)$. The adelic point $P - Q_v \in A(\A_K)$ lies in the kernel of $f$ and in $\widehat{\Sel}(A)$. So $P-Q_v\in \widehat{\Sel}(A)_{\textup{tors}}$. By \cite[Lemma 5.1]{PoonenVoloch} this implies that $P-Q_v \in A(K)$. So $P \in A(K)$.
\end{proof}

Here are details of the claimed analogue of \cite[Proposition 5.3]{PoonenVoloch} used in the proof above.

\begin{Lemma}\label{lem:details}
	Let $W \subset B$ be a finite subscheme of an abelian variety defined over $K$. Suppose there exists an endomorphism $g : B \to B$ of degree $> 1$ such that $B(K^{\textup{sep}})[g] = 0$. Then $W(\A_K) \cap \widehat{\Sel}(B) = W(K)$.
\end{Lemma}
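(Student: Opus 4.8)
The plan is to adapt the argument of \cite[Proposition 5.3]{PoonenVoloch} verbatim, replacing every occurrence of multiplication by $p$ on $B$ with the endomorphism $g$, and checking that the two properties of $p$ actually used there — namely that $B(K^{\textup{sep}})[p]=0$ and that multiplication by $p$ is a finite flat isogeny of degree $>1$ — are supplied in our setting by the hypotheses $B(K^{\textup{sep}})[g]=0$ and $\deg g>1$. First I would recall the structure of that proof: one considers the projective system of $g$-coverings (or $g^m$-coverings) $W_m \to W$ obtained by pulling back $g^m : B \to B$, and one shows that an adelic point of $W$ lying in $\widehat{\Sel}(B)$ lifts compatibly through this tower. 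The condition $B(K^{\textup{sep}})[g]=0$ guarantees that $g^m : B\to B$ is an isomorphism on $K^{\textup{sep}}$-points in the relevant sense, so each $W_m$ is again a finite subscheme which is a torsor under the trivial group scheme over $K^{\textup{sep}}$; this is exactly the role played by $B(K^{\textup{sep}})[p^\infty]=0$ in the original.

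Next I would run the descent step: an adelic point $P\in W(\A_K)\cap \widehat{\Sel}(B)$ determines, for each $m$, a class in an appropriate Selmer-type group, and compatibility through the tower forces this class to lie in $\bigcap_m g^m \widehat{\Sel}(B)$. Using that $\widehat{\Sel}(B)$ is a profinite (in fact compact) group together with $\deg g>1$, one argues that $\bigcap_m g^m\widehat{\Sel}(B)$ is contained in the maximal divisible subgroup, and then that the $v$-adic components at each place pin down a global point of $W$; here the finiteness of $W$ is what makes the intersection of the local conditions a single $K$-point rather than merely a $K^{\textup{sep}}$-point. Concretely, I expect to reduce to the case $W$ a finite set of $K$-points (as in the ambient Proposition~\ref{prop:ZsubA}), show $g^m$ acts on the finite set of liftings of $P$ in each $W_m(\A_K)$, and conclude by a pigeonhole/compactness argument that $P$ itself was rational.

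The main obstacle is verifying that the original argument does not secretly use more about multiplication by $p$ than its degree and its triviality on $\overline{K}^{\textup{sep}}$-torsion — in particular, the original proof in \cite{PoonenVoloch} uses that $[p]$ factors through Frobenius (via $[p]=V\circ F$) when handling inseparability phenomena and identifying the reduction of the coverings. One must check that $g$, being an \'etale isogeny composed appropriately (recall $B$ was obtained via the \'etale isogeny $f$ and $g$ is the isogeny from the appendix), is separable or at least that its inseparable part is harmless, so that the coverings $W_m\to W$ stay geometrically reduced and the local-to-global comparison of points goes through unchanged. Assuming this compatibility — which the appendix's construction is designed to provide — the rest is a routine transcription, so I would state the lemma's proof as ``the proof of \cite[Proposition 5.3]{PoonenVoloch} applies mutatis mutandis with $[p]$ replaced by $g$,'' and then spell out only the two or three places where the substitution requires the hypotheses $\deg g>1$ and $B(K^{\textup{sep}})[g]=0$ in place of the corresponding facts about $p$.
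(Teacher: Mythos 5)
Your overall strategy---transcribe \cite[Proposition 5.3]{PoonenVoloch} with $g$ in place of $[p]$, using only $\deg g>1$ and $B(K^{\textup{sep}})[g]=0$---is exactly the paper's, as is the preliminary reduction to the case $W=W(K)$ via \cite[Prop.\ 3.9]{PoonenVoloch}. The gap is that "the rest is a routine transcription" conceals the one step that actually carries the proof, and the mechanisms you do sketch in its place are not the ones that work. The crux is the injectivity of the localization map $\Sel^{(g)}(B)\to B(K_v)^{(g)}$, where $\Sel^{(g)}(B)$ is the inverse limit of the Selmer groups attached to the isogenies $g^n$ and $B(K_v)^{(g)}=\varprojlim_n B(K_v)/g^nB(K_v)$: a class in the kernel dies in $\HH^1(K^{\textup{sep}},B[g^M])$, hence by inflation--restriction comes from $\HH^1(\Gal(K^{\textup{sep}}/K),B(K^{\textup{sep}})[g^M])$, which vanishes because $B(K^{\textup{sep}})[g^M]=0$. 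That vanishing---not a statement that the coverings $W_m$ are "torsors under the trivial group scheme over $K^{\textup{sep}}$"---is where the hypothesis enters. Likewise the endgame is not a pigeonhole/compactness argument about the maximal divisible subgroup of $\widehat{\Sel}(B)$ (a claim you would still have to justify): from the injectivity one gets $Q_v-Q_{v'}\in\bigcap_n g^nB(K)$ for the local representatives $Q_v\in W(K)$; since $B(K)$ is finitely generated and $\deg g>1$ this intersection is torsion, whence $P-Q_v\in\widehat{\Sel}(B)_{\textup{tors}}$, and \cite[Lemma 5.1]{PoonenVoloch} then puts $P-Q_v$ in $B(K)$.

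Your separability worry also resolves the opposite way from what you suggest. The hypotheses $\deg g>1$ and $\ker(g)(K^{\textup{sep}})=0$ force $\ker(g)$ to be an infinitesimal group scheme, so $g$ is \emph{purely inseparable}, not "separable or nearly so." This is harmless because the Selmer groups are defined in flat cohomology and the only place the Galois cohomology of the kernel appears is the vanishing $\HH^1(\Gal(K^{\textup{sep}}/K),B(K^{\textup{sep}})[g^M])=0$ above; the factorization $[p]=V\circ F$ plays no role in \cite[Proposition 5.3]{PoonenVoloch} or in its adaptation here. So your plan points at the right proof, but as written it does not yet contain it: you need to isolate and prove the injectivity statement, and replace the divisibility/pigeonhole sketch with the finite-generation argument.
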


\begin{proof}
	We have
	\[
		W(K) \subset B(K) \subset B(\A_K)^{\Br} \subset \widehat{\Sel}(B)\,.
	\]
	So it suffices to show that $W(\A_K) \cap \widehat{\Sel}(B) \subset W(K)$. Moreover we can assume $W = W(K)$ as in \cite[Prop. 3.9]{PoonenVoloch}.
	
	Suppose $P \in W(\A_K) \cap \widehat{\Sel}(B)$. For any $v \in \Omega_K$, the $v$-adic component of $P$ is the image of some point $Q_v \in W(K)$, and $P - Q_v \in \widehat{\Sel}(B)$ maps to $0$ in $B(K_v)^{(g)} := \varprojlim_nB(K_v)/	g^n(B(K_v))$. In particular, $P - Q_v$ is in the kernel of $\Sel^{(g)}(B) \to B(K_v)^{(g)}$ where $\Sel^{(g)}(B)$ denotes the inverse limit of the Selmer groups corresponding to the isogenies $g^n$ for $n \ge 1$. Below we show that this map is injective, so the image of $P- Q_v$ in  $\Sel^{(g)}(B)$ is $0$.
	
	Since this holds for any $v$, if $v'$ is any other prime we have 
	\[
	Q_{v'} - Q_v \in \ker\left( B(K) \to \varprojlim_{n}B(K)/g^nB(K) \hookrightarrow \Sel^{(g)}(B)\right)\,.
	\]
	In other words, $(Q_v - Q_{v'}) \in \cap_{n \ge 1}g^nB(K)$. Since $B(K)$ is finitely generated, this implies that $(Q_v - Q_{v'}) \in B(K)_{\textup{tors}}$. Again, since this holds for all $v$ we see that $R := P - Q_v \in \widehat{\Sel}(B)_{\textup{tors}}$. By \cite[Lemma 5.1]{PoonenVoloch} this implies that $P-Q_{v} \in B(K)$. So $P \in W(K)$.

	It remains to prove that $\Sel^{(g)}(B) \to B(K_v)^{(g)}$ is injective. For this it suffices (as in \cite[Proof of 5.2]{PoonenVoloch}) to prove injectivity of $\Sel'^{(g)}(B) \to B(K'_v)^{(g)}$, where $K_v' \subset K^{\textup{sep}}$ denotes the Henselization with respect to $v$ and $\Sel'^{(g)}(B)$ is defined in the same way as $\Sel^{(g)}(B)$ but using $K_v'$ instead of $K_v$. Let $b \in \ker\left( \Sel'^{(g)}(B) \to B(K_v')^{(g)} \right)$ and let $b_M$ denote its image in $\Sel'^{g^M}(B) \subset \HH^1(K_v',B[g^M])$. Then the image of $b_M$ under 
	\[
		\Sel'^{g^M}(B) \to \frac{B(K_v')}{g^MB(K_v')} \subset \HH^1(K_v',B[g^M]) \to \HH^1(K^{\textup{sep}},B[g^M])
	\]
	is $0$. The inflation-restriction sequence
	\[
		0 \to \HH^1(\Gal(K^{\textup{sep}}/K),B(K^{\textup{sep}})[g^M]) \to \HH^1(K,B[g^M]) \to \HH^1(K^{\textup{sep}},B[g^M])
	\]
	shows that $b_M$ comes from an element of $\HH^1(\Gal(K^{\textup{sep}}/K),B(K^{\textup{sep}})[g^M])$. But this group is trivial since $B(K^{\textup{sep}})[g^M]=0$. Since this holds for all $M \ge 1$, we conclude that $b = 0$.
\end{proof}

\section{Proofs of the theorems}

\begin{proof}[Proof of Theorem~\ref{thm:MW}]
By~\cite[Theorem E]{PoonenVoloch} we have $\overline{A(K)} \subset A(\A_K)^{\Br} \subset \widehat{\Sel}(A) \subset A(\A_K)$. intersecting with $X(\A_K)$ we have
	\[
		X(\A_K) \cap \overline{A(K)} \subset X(\A_K) \cap A(\A_K)^{\Br} \subset X(\A_K) \cap \widehat{\Sel}(A)\,,
	\]
	where the rightmost set consists of the adelic points on $X$ which survive $N$-descent for all $N \ge 1$ (relative to the embedding $X \subset A$). In particular, any $P \in X(\A_K) \cap \overline{A(K)}$ survives $p^n$-descent for all $n \ge 1$ . Since $X$ is not isotrivial, Proposition~\ref{prop:main} implies that there is a finite subscheme $Z \subset X \subset A$ which contains $P$. Then $P \in Z(\A_K) \cap \overline{A(K)} = Z(\A_K) \cap A(\A_K)^{\Br} = X(\A_K) \cap \widehat{\Sel}(A) = Z(K)$, where the final equality is Proposition~\ref{prop:ZsubA}.
\end{proof}

\begin{Remark}\label{rem:Sel}
	The preceding proof shows that for $X \subset A$ as in Theorem~\ref{thm:MW} we have 
	\[
		X(K) = X(\A_K) \cap \overline{A(K)} = X(\A_K) \cap A(\A_K)^{\Br} = X(\A_K) \cap \widehat{\Sel}(A)\,.
	\]
\end{Remark}

\begin{proof}[Proof of Theorem~\ref{thm:BM}]
	Let $X/K$ be as in the statement and let $J = \Jac(X)$ be its Jacobian. It suffices to show that $X(\A_K)^{\Br} \subset X(K)$. Passing to some finite separable extension $L/K$ we can embed $X_L$ in $J_L$ via the Abel-Jacobi map corresponding to an $L$-rational point. If $P \in X(\A_K)^{\Br}$, then its image under the inclusion $X(\A_K) \subset X(\A_L) = X_L(\A_L)$ is orthogonal to $\Br(X_L)$ by \cite[Lemma 3.1]{CreutzViray}. By functoriality of the Brauer pairing and Remark~\ref{rem:Sel} we have $X_L(\A_L)^{\Br} \subset X_L(\A_L) \cap J_L(\A_L)^{\Br} = X_L(L).$ Then $P$ is in $X(\A_K) \cap X(L)$ which is equal to $X(K)$ by~\cite[Lemma 3.2]{PoonenVoloch}.
\end{proof}

\begin{Remark}\label{rem:Poonen}
	In the proof of Theorem~\ref{thm:BM} just given \cite[Lemma 3.1]{CreutzViray} and~\cite[Lemma 3.2]{PoonenVoloch} allow us to pass to an extension over which $X$ can be embedded in its Jacobian. Alternatively one can use the following construction suggested to one of us by Poonen. Restriction of scalars gives a map $\Res_{L/K}(X_L) \to \Res_{L/K}(J_L)$. Composing this with the canonical map $X \to \Res_{L/K}(X_L)$ gives a closed immersion $X \to A$ into the abelian variety $A := \Res_{L/K}(J_L)$ over $K$. To prove Theorem~\ref{thm:BM} one can then apply Remark~\ref{rem:Sel} to $X \subset A$.
\end{Remark}

\section*{Acknowledgements}
The authors were supported by the Marsden Fund Council, managed by Royal Society Te Ap\=arangi. We are grateful to Damian R\"ossler for allowing us to include his note~\cite{RosslerNote} (which was originally circulated in 2012 as an improvement to a result in \cite{RosslerI}) as an appendix to this paper. We thank Bjorn Poonen for suggesting the construction in Remark~\ref{rem:Poonen}.

\appendix
\section{On abelian varieties with an infinite group of separable $p^\infty$-torsion points (by Damian R\"{o}ssler)}

If $n\in\N$, we write $[n]$ for the
multiplication by $n$ endomorphism on an abelian variety. If
$h$ is a finite endomorphism of an abelian variety $A$ over a field $L$, we write
$$
A(L)[h^\ell]:=\{x\in A(L)\,|\,h^{\circ \ell}(x)=0\}
$$
and
$$
A(L)[h^\infty]:=\{x\in A(L)\,|\,\exists n\in\N: h^{\circ n}(x)=0\}.
$$
Here $h^{\circ n}(x):=h(h(h(\cdots (x)\cdots)))$, where there are $n$ pairs of brackets.
The notation $A(L)[n^\ell]$ (resp. $A(L)[n^\infty]$) will be a shorthand for
$A(L)[[n]^\ell]$ (resp. $A(L)[[n]^\infty]$).

Let now $K_0$ be the function field of a smooth and proper curve $U$ over a finite field $\F$ of characteristic $p>0$. Let $B$ be an abelian variety over $K_0$.  Suppose that for some $n>3$ prime to $p$, the group scheme $B[n]$ is constant and that
the N\'eron model of $B$ over $U$ has a semiabelian connected component.

\begin{Proposition}
There exists
\begin{itemize}
\item an abelian variety $C$ over $K_0$;
\item an \'etale $K_0$-isogeny $\phi:B\to C$;
\item an \'etale $K_0$-isogeny $f:C\to C$;
\item a $K_0$-isogeny $g:C\to C$;
\item a natural number $r\geq 0$
\end{itemize}
such that
\begin{itemize}
\item[\rm (a)] $g\circ f=[p^r]$ and $g\circ f=f\circ g$;
\item[\rm (b)] $C(K_0^\sep)[p^\infty]=C(K_0^\sep)[f^\infty]=C(\bar K_0)[f^\infty]$;
\item[\rm (c)] $C(K_0^\sep)[g^\infty]=0$.
\end{itemize}
\end{Proposition}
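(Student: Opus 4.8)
The plan is to pass, up to an \'etale $K_0$-isogeny, to an abelian variety defined over $\F_q$, and there to read off the factorisation of $[p^r]$ from Frobenius and Verschiebung. The motivating point is that for $B_0/\F_q$ the relative $q$-power Frobenius $\pi:=F_{B_0/\F_q}$ is a genuine endomorphism of $B_0$ over $\F_q$ (base change of $\F_q$ along $x\mapsto x^q$ is the identity); it is purely inseparable of degree $q^{\dim B_0}$, so $\ker(\pi^{\circ n})(\bar K_0)=0$ for all $n$, and its Verschiebung partner $V$ satisfies $\pi V=V\pi=[q]$. Since $\pi$ is an automorphism on the \'etale quotient $B_0[p^\infty]^{\mathrm{et}}$, the endomorphism $V$ acts there as $q$ times a unit, hence $\bigcup_n\ker(V^{\circ n})(\bar\F_q)=B_0(\bar\F_q)[p^\infty]$. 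Thus if $B_0$ is ordinary, so that $V$ is \'etale, one may take (after base change) $f=V$, $g=\pi$, and $r=a$ with $q=p^a$, and (a)--(c) are immediate. What remains are (i) the reduction to an abelian variety over $\F_q$, and (ii) the failure of $V$ to be \'etale when $B_0$ is not ordinary.

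For (i): by Poincar\'e reducibility together with Chow's $\F_q/K_0$-trace, $B$ is $K_0$-isogenous to $(B_0\times_{\F_q}K_0)\times_{K_0}B'$ with $B'$ of trivial trace, the hypotheses on the level structure and on semistable reduction passing to this decomposition. The arithmetic input I would establish here---and the reason the construction can afford to absorb everything non-isotrivial into $\phi$---is that $B'(K_0^\sep)[p^\infty]$ is \emph{finite}: unbounded separable $p$-power torsion forces an isotrivial factor. I would prove this by analysing the connected--\'etale sequence of the $p$-divisible group over the N\'eron model: semistable reduction makes the maximal \'etale quotient a lisse $\Z_p$-sheaf on a dense open $V\subseteq U$, and triviality of the trace keeps its geometric monodromy large enough that the inflation--restriction sequence computing $B'[p^\infty](K_0^\sep)$ leaves only finitely many invariant sections (this is in the spirit of the statement improved here from \cite{RosslerI}). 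Granting it, one kills the finite group $B'(K_0^\sep)[p^\infty]$ by an \'etale $K_0$-isogeny $B'\to B''$; because $K_0^\sep$ is separably closed, $\HH^1(K_0^\sep,-)$ of a finite \'etale group scheme vanishes, so in fact $B''(K_0^\sep)[p^\infty]=0$, and on $B''$ one takes $f''=\mathrm{id}$, $g''=[p^r]$: then $\ker((g'')^{\circ n})(K_0^\sep)\subseteq B''(K_0^\sep)[p^\infty]=0$, so $B''$ is invisible to (b) and satisfies (c).

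For (ii)---the step I expect to be the main obstacle---one must replace $B_0$ by a good model in its $\F_q$-isogeny class. By Honda--Tate, $\End^0(B_0)$ is a division algebra over $\Q(\pi)$ whose $p$-adic places separate the Newton slopes; choose $B_0$ so that $\End(B_0)$ contains the maximal order $\calO$ of $\Q(\pi)$, let $\mathfrak p_1,\dots,\mathfrak p_s$ be the primes of $\Q(\pi)$ above $p$ at which the slope is $0$, pick $r$ so that $\prod_i\mathfrak p_i^{\,r e_{\mathfrak p_i}}$ is principal, and let $f_0\in\calO\subseteq\End(B_0)$ be a generator. Then $f_0$ is a unit at every positive-slope prime, hence an isomorphism on the connected part of $B_0[p^\infty]$, so $\ker f_0$ is \'etale; on the slope-$0$ part it equals $p^r$ times a unit, so $\bigcup_n\ker(f_0^{\circ n})(\bar\F_q)=B_0(\bar\F_q)[p^\infty]$ as before; and $g_0:=[p^r]f_0^{-1}$ lies again in $\calO$, is a unit at each slope-$0$ prime (so $\ker(g_0^{\circ n})$ is infinitesimal), and $g_0f_0=f_0g_0=[p^r]$. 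After enlarging $r$ to a common multiple so that one exponent works on all simple factors of $B_0$ and on $B''$, set $C:=(B_0\times_{\F_q}K_0)\times_{K_0}B''$, $f:=(f_0)_{K_0}\times\mathrm{id}$, $g:=(g_0)_{K_0}\times[p^r]$, and let $\phi$ be the composite of the trace-decomposition isogeny with $B'\to B''$. Condition (a) holds factor by factor; (b) holds because on the isotrivial factor $C(K_0^\sep)[p^\infty]=B_0(\bar\F_q)[p^\infty]=\bigcup_n\ker(f_0^{\circ n})(\bar\F_q)=C(K_0^\sep)[f^\infty]$, with the last term of (b) equal since $f$ is \'etale and all of this torsion is already rational over $\bar\F_q\subseteq K_0^\sep$, while on $B''$ all three terms vanish; and (c) holds because $\ker(g^{\circ n})$ is infinitesimal on the isotrivial factor and has no nonzero separable points on $B''$. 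Beyond these formal checks, the two genuinely substantial points are exactly the ones flagged: the finiteness of $B'(K_0^\sep)[p^\infty]$, via monodromy over $U/\F_q$ using semistability, and, inside $B_0$, producing a truly \emph{\'etale} $f$ in the non-ordinary case by exploiting the $p$-adic arithmetic of $\End(B_0)$.
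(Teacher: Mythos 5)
Your construction follows a genuinely different route from the paper's, and it contains a genuine gap at its load-bearing step. You split $B$ up to isogeny into an isotrivial factor $B_0\times_{\F_q}K_0$ and a trace-zero factor $B'$, handle the isotrivial factor by Honda--Tate/Dieudonn\'e theory, and dispose of $B'$ by asserting that $B'(K_0^\sep)[p^\infty]$ is finite, with a one-sentence monodromy heuristic in place of a proof. That finiteness assertion is precisely the hard point: it is not a formal consequence of trivial trace plus semistability. Large geometric monodromy on the prime-to-$p$ Tate modules says nothing about the \'etale part of the $p$-divisible group, and even if one produces a nonzero $\pi_1$-invariant in $T_p^{\etale}$, converting that into an isotrivial isogeny factor of $B'$ requires a $p$-adic Tate-conjecture-type input (de Jong's theorem gives a map of $p$-divisible groups, not of abelian varieties) plus further work. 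This is exactly the statement that the appendix is engineered to \emph{avoid}: R\"ossler's argument never decides whether $B(K_0^\sep)[p^\infty]$ is finite. Instead he forms the tower $B_0=B$, $B_{\ell+1}=B_\ell/(B_\ell(K_0^\sep)[p])$ of \'etale quotients, observes that the hypotheses (constant $n$-level structure, $n>3$ prime to $p$, semiabelian N\'eron models) force the $B_\ell$ to lie in finitely many isomorphism classes, and by pigeonhole extracts a repetition $C\simeq B_{n_1}\simeq B_{n_2}$; the resulting \'etale self-isogeny $f=\phi_{n_1,n_2}$ automatically exhausts $C(K_0^\sep)[p^\infty]$, and $g$ with $g\circ f=[p^{n_2-n_1}]$ then has no nonzero separable kernel points. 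Your argument, by contrast, uses the hypotheses on $B[n]$ and the N\'eron model only incidentally, which is a sign that the essential boundedness input has been replaced by an unproved assertion.

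There are two further problems, less central but real. First, the proposition requires $\phi:B\to C$ to be an \emph{\'etale} isogeny, and the Poincar\'e/Chow-trace decomposition only gives an isogeny $B\to (B_0\times_{\F_q}K_0)\times B'$ whose kernel (the intersection of the two complementary abelian subvarieties) has no reason to be reduced; quotienting only by its \'etale part destroys the product decomposition you need. Second, on the isotrivial factor your Honda--Tate construction of $f_0$ generating $\prod_i\mathfrak p_i^{re_{\mathfrak p_i}}$ is plausible but is written for $\End^0(B_0)$ commutative; in general it is a division algebra over $\Q(\pi)$ and one must work in its center or with a maximal order, and one must also justify replacing $B_0$ by a model with maximal endomorphism ring while keeping the isogeny to $B$ \'etale. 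None of these is fatal on its own, but together with the unproved finiteness of $B'(K_0^\sep)[p^\infty]$ the proposal does not establish the proposition. If you want to salvage your approach, the honest statement of what you are assuming is ``an abelian variety over $K_0$ with semistable reduction and infinite separable $p^\infty$-torsion has a nontrivial isotrivial isogeny factor,'' and you should either prove that or, better, adopt the quotient-tower argument, which sidesteps it entirely.
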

\begin{proof}
For $\ell\geq 0$, define inductively
$$
B_0:=B
$$
and
$$
B_{\ell+1}:=B_\ell/(B_\ell(K_0^\sep)[p]).
$$
For $\ell_2\geq\ell_1$, let $\phi_{\ell_1,\ell_2}:B_{\ell_1}\to B_{\ell_2}$ be the (\'etale!) morphism
obtained by composing the natural morphisms $B_{\ell_1}\to B_{\ell_1+1}\to\cdots\to B_{\ell_2}$.
We first \underline{claim} that
\begin{equation}
(\ker\, \phi_{\ell_1,\ell_2})(K_0^\sep)=B_{\ell_1}(K_0^\sep)[p^{\ell_2-\ell_1}]
\label{oeq}
\end{equation}
We prove the claim by induction on $\ell_2-\ell_1$. For $\ell_2-\ell_1\leq 1$, the claim is true
by definition. Suppose that $\ell_2-\ell_1\geq 1$. Let $x\in B(K_0^\sep)[p^{\ell_2-\ell_1}]$. Then
$[p^{\ell_2-\ell_1-1}](x)\in B(K_0^\sep)[p]$ and thus
$$
\phi_{\ell_1,\ell_1+1}([p^{\ell_2-\ell_1-1}](x))=[p^{\ell_2-\ell_1-1}](\phi_{\ell_1,\ell_1+1}(x))=0.
$$
Applying the inductive assumption to $\phi_{\ell_1,\ell_1+1}(x)$, we see that
$\phi_{\ell_1+1,\ell_2}(\phi_{\ell_1,\ell_1+1}(x))=\phi_{\ell_1,\ell_2}(x)=0$. This proves that
$(\ker\, \phi_{\ell_1,\ell_2})(K_0^\sep)\supseteq B_{\ell_1}(K_0^\sep)[p^{\ell_2-\ell_1}]$. To prove
the opposite inclusion, let $x\in (\ker\, \phi_{\ell_1,\ell_2})(K_0^\sep)$. We compute
$$
\phi_{\ell_1,\ell_2}(x)=\phi_{\ell_1+1,\ell_2}(\phi_{\ell_1,\ell_1+1}(x))=0,
$$
which implies (by the inductive hypothesis) that
$$
[p^{\ell_2-\ell_1-1}](\phi_{\ell_1,\ell_1+1}(x))=\phi_{\ell_1,\ell_1+1}([p^{\ell_2-\ell_1-1}](x))=0,
$$
which in turn implies that $[p]([p^{\ell_2-\ell_1-1}](x))=[p^{\ell_2-\ell_1}](x)=0$. This proves that
$(\ker\, \phi_{\ell_1,\ell_2})(K_0^\sep)\subseteq B_{\ell_1}(K_0^\sep)[p^{\ell_2-\ell_1}]$ and completes the proof
of the claim.

Now we know that by the reasoning made in the last page of \cite{RosslerI},
that there are only finitely many isomorphism classes of
abelian varieties over $K_0$ in the sequence $\{B_\ell\}_{\ell\in\N}$.
Let $C$ be an abelian variety over $K_0$, which appears at least twice  in the sequence $\{B_\ell\}_{\ell\in\N}$.
Let $n_2>n_1$ be such that $C\simeq B_{n_1}\simeq B_{n_2}$.
Then by construction (under the identification $C=B_{n_1}$)
\begin{equation}
\phi_{n_1,n_2}^{\circ\ell}=\phi_{n_1,n_1+\ell\cdot(n_2-n_1)}
\label{ooeq}
\end{equation}
for any $\ell\geq 1$ and thus
\begin{equation}
C(K_0^\sep)[p^\infty]=C(K_0^\sep)[\phi_{n_1,n_2}^\infty]
\label{impeq}
\end{equation}
Now define $f:=\phi_{n_1,n_2}$ and $r:=n_2-n_1$.
Define $g$ as the only $K_0$-isogeny such that $g\circ f=[p^r]$.

Notice then that the identity $g\circ f=[p^r]$ implies the identity $f\circ g=[p^r]$. To see this last fact directly, recall first that
there are natural injection of rings
$$
\End_{K_0}(C)\hookrightarrow\End_{\bar K_0}(C_{\bar K_0})\hookrightarrow \End_{\Z_t}(T_t(C(\bar K_0)))\hookrightarrow
\End_{\Q_t}(T_t(C(\bar K_0))\otimes
\Q_t)
$$
where $T_t(C(\bar K_0))$ is the classical Tate module of $C_{\bar K_0}$ and $t>0$ is some prime
number $\not=p$. Now if $M$ and $N$ are two square matrices of the same size with coefficients
in a field of characteristic $0$, such that $M\cdot N=p^r$, then $p^{-r}N$ is the inverse matrix of $M$ and thus
$N\cdot M=p^r$. This fact combined with the existence of the above injections implies that
$f\circ g=[p^r]$ if  $g\circ f=[p^r]$.

We have already proven (a). Point (b) is contained in equation \eqref{impeq}.

We now prove (c). Suppose that for some $\ell\geq 0$ and some $x\in C(K_0^\sep)$,
we have $g^{\circ l}(x)=0$. Let $y\in (f^{\circ\ell})^{-1}(x)\subseteq C(K_0^\sep)$. Then
$g^{\circ l}(f^{\circ l}(y))=[p^{r\ell}](y)=0$. Hence $f^{\circ l}(y)=0=x$ by \eqref{oeq} and \eqref{ooeq}.
\end{proof}

\section*{References}

\begin{biblist}

\bib{AV}{article}{
author={Abramovich, Dan}, 
author={Voloch, Jos\'e Felipe},
title={Toward a proof of the Mordell-Lang conjecture in characteristic $p$}, 
journal={International Math. Research Notices}, 
volume={5},
date={1992},
pages={103-115},
}

\bib{CreutzViray}{article}{
author={Creutz, Brendan},
author={Viray, Bianca},
title={Quadratic points on intersections of two quadrics},
journal={Algebra Number Theory},
note={to appear}
}

\bib{CV}{article}{
   author={Creutz, Brendan},
   author={Voloch, Jos\'{e} Felipe},
   title={The Brauer-Manin obstruction for constant curves over global
   function fields},
   language={English, with English and French summaries},
   journal={Ann. Inst. Fourier (Grenoble)},
   volume={72},
   date={2022},
   number={1},
   pages={43--58},
   issn={0373-0956},
}

\bib{CV2}{article}{
   author={Creutz, Brendan},
   author={Voloch, Jos\'{e} Felipe},
   title={Etale descent obstruction and anabelian geometry of curves over finite fields},
   eprint={arXiv:2306.04844}
   date={2023}
}

\bib{MR1333294}{article}{
   author={Hrushovski, Ehud},
   title={The Mordell-Lang conjecture for function fields},
   journal={J. Amer. Math. Soc.},
   volume={9},
   date={1996},
   number={3},
   pages={667--690},
   issn={0894-0347},
}

\bib{MR2708514}{book}{
   author={Huggins, Bonnie Sakura},
   title={Fields of moduli and fields of definition of curves},
   note={Thesis (Ph.D.)--University of California, Berkeley},
   publisher={ProQuest LLC, Ann Arbor, MI},
   date={2005},
   pages={156},
}

\bib{Poonen}{article}{
   author={Poonen, Bjorn},
   title={Heuristics for the Brauer-Manin obstruction for curves},
   journal={Experiment. Math.},
   volume={15},
   date={2006},
   number={4},
   pages={415--420},
   issn={1058-6458}
}

\bib{PoonenVoloch}{article}{
   author={Poonen, Bjorn},
   author={Voloch, Jos\'e Felipe},
   title={The Brauer-Manin obstruction for subvarieties of abelian varieties
   over function fields},
   journal={Ann. of Math. (2)},
   volume={171},
   date={2010},
   number={1},
   pages={511--532},
   issn={0003-486X},
}

\bib{RosslerI}{article}{
   author={R\"{o}ssler, Damian},
   title={Infinitely $p$-divisible points on abelian varieties defined over
   function fields of characteristic $p>0$},
   journal={Notre Dame J. Form. Log.},
   volume={54},
   date={2013},
   number={3-4},
   pages={579--589},
   issn={0029-4527},
}

\bib{RosslerNote}{article}{
	author={R\"{o}ssler, Damian},
	title={On abelian varieties with an infinite group of separable $p^\infty$-torsion points},
	note={unpublished note available at http://people.maths.ox.ac.uk/rossler/mypage/publi.html},
	date={2012}
}

\bib{Scharaschkin}{book}{
   author={Scharaschkin, Victor},
   title={Local-global problems and the Brauer-Manin obstruction},
   note={Thesis (Ph.D.)--University of Michigan},
   publisher={ProQuest LLC, Ann Arbor, MI},
   date={1999},
   pages={59},
   isbn={978-0599-63464-0},
}

\bib{Skorobogatov}{book}{
   author={Skorobogatov, Alexei},
   title={Torsors and rational points},
   series={Cambridge Tracts in Mathematics},
   volume={144},
   publisher={Cambridge University Press, Cambridge},
   date={2001},
   pages={viii+187},
   isbn={0-521-80237-7},
}

\bib{Stoll}{article}{
   author={Stoll, Michael},
   title={Finite descent obstructions and rational points on curves},
   journal={Algebra Number Theory},
   volume={1},
   date={2007},
   number={4},
   pages={349--391},
   issn={1937-0652},
}

\bib{MR3618571}{article}{
   author={Szpiro, Lucien},
   title={Propri\'{e}t\'{e}s num\'{e}riques du faisceau dualisant relatif},
   note={Seminar on Pencils of Curves of Genus at Least Two},
   journal={Ast\'{e}risque},
   number={86},
   date={1981},
   pages={44--78},
}

\bib{V}{article}{
author={Voloch, Jos\'e Felipe},
   title={On the conjectures of Mordell and Lang in positive
   characteristics},
   journal={Invent. Math.},
   volume={104},
   date={1991},
   number={3},
   pages={643--646},
}
\end{biblist}

\end{document}